\numberwithin{equation}{section}
\newtheorem{theorem}{Theorem}
\newtheorem{corollary}[theorem]{Corollary}
\theoremstyle{definition}
\newtheorem{remark}[theorem]{Remark}
\newcommand{\Z}{\mathbb{Z}}
\newcommand{\N}{\mathbb{N}}
\newcommand{\calH}{\mathcal{H}}
\newcommand{\calL}{\mathcal{L}}
\newcommand{\calP}{\mathcal{P}}
\newcommand{\calT}{\mathcal{T}}
\newcommand{\calU}{\mathcal{U}}
\newcommand{\frake}{\mathfrak e}
\newcommand{\id}{\operatorname{id}}
\newcommand{\p}[2]{P_{#1, #2}(z)}
\let\@@pmod\pmod 
\DeclareRobustCommand{\pmod}{\@ifstar\@pmods\@@pmod} 
\def\@pmods#1{\mkern4mu({\operator@font mod}\mkern 6mu#1)}
\begin{document}

\title[Correction to: The Fourier expansion of  Hecke operators]{Correction and addendum to: The Fourier expansion of  Hecke operators for vector-valued modular forms}

\author{Oliver Stein}
\address{Fakult\"at f\"ur Informatik und Mathematik, Ostbayerische Technische Hochschule Regensburg\\Galgenbergstrasse 32\\93053 Regensburg\\Germany}
\email{oliver.stein@oth-regensburg.de}
\thanks{}
\subjclass[2020]{11F25 11F27}
\thanks{}
\keywords{}

\begin{abstract}
We correct a mistake in \cite{St} leading to erroneous formulas in Theorems 5.2 and 5.4. As an immediate corollary of a  formula in \cite{BCJ} we give a formula, which relates the Hecke operators $T(p^2)\circ T(p^{2l-2})$, $T(p^{2l})$ and $T(p^{2l-4})$ and comment on it.  
\end{abstract}

\maketitle

\section{Introduction}\label{eq:subsec_mistakes}
There is a slight error in the calculations which lead to the (unfortunately incorrect) formula (5.3) in Theorem 5.2 in  \cite{St} and subsequently to slightly erroneous formulas in Theorem 5.4.  I was made aware of this mistake by Bouchard et. al. in their paper \cite{BCJ}. I am grateful to them for pointing this out. All other statements are unaffected. In this note we correct these mistakes and give a formula for  algebraic relations the Hecke operators $T(p^{2l})$ satisfy.  We also add a remark regarding this formula. 

We use the same notation as in \cite{St}. In particular, for integers $a,b$ by $(a,b)$ we mean the greatest common divisor of $a$ and $b$.  Additionally, we adopt the some notation from \cite{BCJ}. In particular, for an integer $n$ we use for the scaled quadratic form the symbol $q_n$, that is
\begin{equation}
  q_n(\cdot) = nq(\cdot).
  \end{equation}
Many calculations of Theorem 5.2 and subsequently of Theorem 5.4 in \cite{St} are based on
the identity
\begin{equation}\label{eq:gauss_sum_identity}
  \sum_{v\in L/p^sL}e\left(\frac{1}{p^s}(q(v+\lambda))\right) = e(\frac{1}{p^s}q(\lambda))\sum_{v\in L/p^sL}e\left(\frac{1}{p^s}q(v)\right), 
\end{equation}
where $s\in \Z$ is positive integer and $\lambda\in \calL^{p^s}$. 
Unfortunately, this equation is incorrect:
The left-hand side of \eqref{eq:gauss_sum_identity} is independent of the representative of $\lambda$. Thus, passing from $\lambda$ to $\lambda + w$ for some $w\in L$ leaves the sum on the left unchanged.
However, the expression $e(\frac{1}{p^s}q(\lambda))$ is not independent of the chosen representative. In fact, we have
\begin{equation}\label{eq:quadratic_form_id_1}
\frac{1}{p^s}q(\lambda+w) = \frac{1}{p^s}q(\lambda) + \frac{1}{p^s}(\lambda,w)+\frac{1}{p^s}q(v).
\end{equation}
The latter summand of \eqref{eq:quadratic_form_id_1} is not in $\Z$ and therefore
\[
e(\frac{1}{p^s}q(\lambda+w))\not=e(\frac{1}{p^s}q(\lambda)).
\]

Later in the proof of Theorem 5.2 it is  claimed that the identity
\begin{equation}\label{eq:gauss_sum_id}
\sum_{v\in L/p^sL}e\left(\frac{t}{p^s}q(v+h\lambda - p^l\nu)\right) = e\left(\frac{t}{p^s}q(h\lambda-p^l\nu)\right)\sum_{v\in L/p^sL}e\left(\frac{t}{p^s}q(v)\right)
\end{equation}
holds. Here $\nu$ runs through $\calL$ and $h\lambda-p^l\nu$ is an element of $\calL^{p^m}$ for each $\nu$ ($m=(s,l)$), which implies that $\lambda\in \calL^{p^m}$. The first sum is part of a bigger expression involving a sum over $\calL$ parameterized by $\nu$, see \eqref{eq:action_beta_s}. The term  $e\left(\frac{t}{p^s}q(h\lambda-p^l\nu)\right)$ needs therefore to be independent of the choice of the representative of $\lambda$ and $\nu$. But, as pointed out above, this is not the case.
It follows that the sum over $v$ in \eqref{eq:action_beta_s} cannot be replaced by the right-hand side of \eqref{eq:gauss_sum_id}. 

\section{Correction}
\begin{theorem}[corrected Theorem 5.2]\label{thm:weil_representation}
  Let $p$ be an odd prime, $s,l$ positive integers with $s<2l$, $h\in (\Z/p^s\Z)^*$ and $D=\dim(L)$. Then
  \begin{equation}\label{eq:weil_represenstation}
    \frake_\lambda\mid \beta_{h,s} =
      \begin{cases}
        p^{-sD/2}\sum_{v\in L/p^sL}e\left(-\frac{h}{p^s}q(v+\lambda) \right)\frake_{p^{l-s}\lambda}, &  l\ge s,\\
       p^{-sD/2}\sum_{\substack{\mu\in \calL\\p^{s-l}\mu = \lambda}}\sum_{v\in L/p^lL}e\left(-\frac{hp^{s-l}}{p^l}q(v+\mu)\right)\frake_\mu, & l < s. 
        \end{cases}
  \end{equation}
\end{theorem}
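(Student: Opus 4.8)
The plan is to recompute $\frake_\lambda\mid\beta_{h,s}$ directly from the definition of $\beta_{h,s}$ in \cite{St}, reproducing that derivation up to the expression \eqref{eq:action_beta_s} and then completing it \emph{without} invoking the invalid factorization \eqref{eq:gauss_sum_id}. Concretely, I would first re-expand $\beta_{h,s}$ through the Weil representation generators exactly as before, so that one again arrives at a double sum of the shape $p^{-sD/2}\sum_{\nu\in\calL}\bigl(\sum_{v\in L/p^sL}e(\tfrac{t}{p^s}q(v+h\lambda-p^l\nu))\bigr)\frake_{(\cdots)}$, where $t$ is the unit attached to $h$. The only change from \cite{St} is that the inner Gauss sum must now be carried along with its shift intact.

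First I would normalize the inner sum. Since $h\in(\Z/p^s\Z)^*$, the substitution $v\mapsto hv$ is a bijection of $L/p^sL$, and after it the inner sum becomes $\sum_{v\in L/p^sL}e(\tfrac{t}{p^s}q(h(v+\lambda)-p^l\nu))$. Expanding the quadratic form and using the relation $t\equiv-h^{-1}\pmod{p^s}$ between $t$ and $h$ isolates the clean quadratic part $-\tfrac{h}{p^s}q(v+\lambda)$, while the remaining cross term and $\nu$-quadratic term are $p^{l-s}(v+\lambda,\nu)$ and $t p^{2l-s}q(\nu)$. Here the two regimes separate naturally: because $s<2l$, the term $p^{2l-s}q(\nu)$ is an integer multiple of $q(\nu)\in\Z$ and disappears modulo $1$, and the behaviour of $p^{l-s}(v+\lambda,\nu)$ depends on the sign of $l-s$.

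In the case $l\ge s$ the factor $p^{l-s}(v,\nu)$ is an integer (as $v\in L$, $\nu\in L'$), so it drops out and the inner sum factors as $e(p^{l-s}(\lambda,\nu))\sum_{v}e(-\tfrac{h}{p^s}q(v+\lambda))$; I would then show that summing the surviving character $e(p^{l-s}(\lambda,\nu))$ against the $\nu$-indexed basis vectors collapses the $\nu$-sum to the single term $\frake_{p^{l-s}\lambda}$, giving the first line of \eqref{eq:weil_represenstation}. In the case $l<s$ the cross term is genuinely $p^{-(s-l)}$-fractional; here I would split $v=v_0+p^lw$ with $v_0\in L/p^lL$ and $w\in L/p^{s-l}L$, observe that $p^{2l-s}q(w)\in\Z$ kills the $w$-quadratic term, and sum out $w$ by orthogonality. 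This contributes a factor $p^{(s-l)D}$ and enforces the congruence $v_0+h\lambda-p^l\nu\in p^{s-l}L'$, which is exactly the condition that reorganizes the $\nu$-sum into the sum over preimages $\mu\in\calL$ with $p^{s-l}\mu=\lambda$ and rewrites the Gauss sum on the coarser quotient $L/p^lL$ with weight $\tfrac{hp^{s-l}}{p^l}$, yielding the second line.

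The main obstacle is precisely this reorganization of the coupled double sum over $\nu\in\calL$ and $v$: one must verify that, \emph{without} the forbidden factorization, the shifted Gauss sums remain independent of the chosen representatives of $\lambda$ and $\nu$, and that the orthogonality and collapse arguments assemble into the single index $p^{l-s}\lambda$ for $l\ge s$, respectively the preimage sum for $l<s$. Bookkeeping of the normalizing powers of $p$ (and any metaplectic sign) is routine but must be tracked so that the prefactor lands on $p^{-sD/2}$ in both cases. I expect the $l<s$ reduction, where the summation lattice itself changes from $L/p^sL$ to $L/p^lL$, to be the most delicate point.
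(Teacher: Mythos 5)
Your overall architecture --- start from \eqref{eq:action_beta_s}, substitute $v\mapsto hv$, expand the quadratic form, and collapse by orthogonality in $\nu$ --- is the same as the paper's, but the step where you dispose of the $\nu$-quadratic term rests on a false statement. You claim that $p^{2l-s}q(\nu)$ ``disappears modulo $1$'' because $q(\nu)\in\Z$. This is wrong: $\nu$ runs over the discriminant group $\calL=L'/L$, on which $q$ takes values in $\Q/\Z$, so $p^{2l-s}q(\nu)$ is not integral in general --- in particular not when $p$ divides the level of $L$, which is the case of interest for these Hecke operators. What actually happens is a cancellation: the substitution $v\mapsto hv$ produces the factor $e(p^{2l-s}tq(\nu))$ (this is \eqref{eq:gauss_sum_id_1}), and it cancels against the factor $e(-p^{2l-s}tq(\nu))$ that is \emph{already present} in \eqref{eq:action_beta_s}. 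Your claimed ``shape'' of the starting expression omits precisely this factor, and also the coupling character $e(b(\nu,-\rho))$ and the phases $e(hrq(\lambda))$ and $e(b(h\lambda-p^l\nu,-r\lambda))$. The omission is not cosmetic: the basis vectors in \eqref{eq:action_beta_s} are indexed by $\rho$, not by $\nu$, so there are no ``$\nu$-indexed basis vectors'' to sum against; without $e(b(\nu,-\rho))$ the $\nu$-sum you describe is $\sum_{\nu\in\calL}e\left(p^{2l-s}tq(\nu)+p^{l-s}b(\lambda,\nu)\right)$, a quadratic Gauss sum on $\calL$ rather than a linear character sum, and it does not produce the delta-condition \eqref{eq:character_sum} that forces $\rho=p^{l-s}\lambda$. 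As written, your argument reaches the stated formula only through two compensating errors (dropping a factor that is not in your expression, and wrongly declaring its counterpart integral). Since the entire content of this correction is that such representative-dependent phases cannot be discarded casually --- that is exactly the mistake in \cite{St} --- this gap is fatal for the proof as proposed.

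For what it is worth, if the phases are restored your $s\le l$ case becomes literally the paper's proof (\eqref{eq:gauss_sum_id_1} through \eqref{eq:weil_beta_h_s_1}). For $s>l$ your plan of splitting $v=v_0+p^lw$ with $v_0\in L/p^lL$, $w\in L/p^{s-l}L$, and summing out $w$ by orthogonality (picking up $p^{(s-l)D}$ and the congruence $v_0+h\lambda-p^l\nu\in p^{s-l}L'$) is a genuine alternative to the paper's route, which instead rewrites the Gauss sum as $\sum_{\delta\in\calL(p^s),\ p^s\delta=\lambda}e(-hp^sq(\delta))$ in \eqref{eq:gauss_sum_scaled_lattice}, uses the support condition coming from \eqref{eq:character_sum} to replace $\calL(p^s)$ by $\calL(p^l)$, and reparameterizes by $\mu=p^l\delta$ to arrive at \eqref{eq:weil_repr_identity}. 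Your version is more elementary, but it leaves a congruence coupling $v_0$ and $\nu$ that must still be untangled into the preimage sum over $\mu$ with $p^{s-l}\mu=\lambda$; the paper's reparameterization does that bookkeeping automatically.
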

\begin{proof}
We start with formula (5.9) in the proof of Theorem 5.2 and use subsequently equation (5.12) to obtain
\begin{equation}\label{eq:action_beta_s}
\begin{split}
  &  \frac{e(hrq(\lambda)}{\sqrt{|\calL|}\sqrt{|\calL(p^s)|}}\sum_{\rho\in \calL}\sum_{\nu\in \calL}e(-p^{2l-s}tq(\nu)+b(\nu,-\rho))\\
  &\times \sum_{\substack{\delta\in \calL(p^s)\\p^s\delta = h\lambda-p^l\nu \bmod{L}}}e(p^stq(\delta) + p^sb(\delta,-r\lambda))\frake_\rho \\
  &= \frac{e(hrq(\lambda)}{\sqrt{|\calL|}\sqrt{|\calL(p^s)|}}\sum_{\rho\in \calL}\sum_{\nu\in \calL}e(-p^{2l-s}tq(\nu)+b(\nu,-\rho))\\
  & \times e(b(h\lambda-p^l\nu,-r\lambda))\sum_{v\in L/p^sL}e\left(\frac{t}{p^s}q(v+h\lambda-p^l\nu\right).
\end{split}
\end{equation}

Note that $p^s$ and $h$ are related by the equation $rp^s-ht=1$. Therefore, $(|L/p^sL),h)=1$ and the map $v\mapsto hv$ is an isomorphism on $L/p^sL$. Thus
\begin{equation}\label{eq:gauss_sum_id_1}
  \begin{split}
  & \sum_{v\in L/p^sL}e\left(\frac{t}{p^s}q(v+h\lambda-p^l\nu)\right) \\
  &= \sum_{v\in L/p^sL}e\left(\frac{t}{p^s}q(hv+h\lambda-p^l\nu)\right)\\
  &=e(p^{2l-s}tq(\nu))\sum_{v\in L/p^sL}e\left(\frac{th^2}{p^s}q(v+\lambda) -\frac{th}{p^s}b(v+\lambda,p^l\nu)\right).
\end{split}
  \end{equation}
Note that the last sum over $v$ is independent of the choice of the representative of $\nu$ and $\lambda$ since
\begin{equation}\label{eq:gauss_sum_id3}
  \begin{split}
  &  \sum_{v\in L/p^sL}e\left(\frac{th^2}{p^s}q(v+\lambda) -\frac{th}{p^s}b(v+\lambda,p^l\nu)\right) \\
    & = e(-p^{2l-s}tq(\nu))\sum_{v\in L/p^sL}e\left(\frac{t}{p^s}q(v+h\lambda-p^l\nu)\right)
    \end{split}
\end{equation}
and both,  $e(-p^{2l-s}tq(\nu))$ and  $\sum_{v\in L/p^sL}e\left(\frac{t}{p^s}q(v+h\lambda-p^l\nu)\right)$ have this property.  
Using the relation $rp^s -ht =1$, the right-hand side of \eqref{eq:gauss_sum_id3} becomes
\begin{equation}\label{eq:gauss_sum_id_2}
  \begin{split}
    e(p^{2l-s}tq(\nu))e(hrq(\lambda))e(-rb(\lambda,p^l\nu))\sum_{v\in L/p^sL}e\left(-\frac{h}{p^s}q(v+\lambda) +\frac{1}{p^s}b(v+\lambda,p^l\nu)\right).
    \end{split}
\end{equation}

Replacing the sum over $v$ in \eqref{eq:action_beta_s} with \eqref{eq:gauss_sum_id_2} yields
\begin{equation}\label{eq:action_beta_s_1}
  \begin{split}
  &  \frac{1}{\sqrt{|\calL|}\sqrt{|\calL(p^s)|}}\sum_{\rho\in \calL}\sum_{\nu\in \calL}e(b(\nu,-\rho))\sum_{v\in L/p^sL}e\left(-\frac{h}{p^s}q(v+\lambda) +\frac{1}{p^s}b(v+\lambda,p^l\nu)\right)\frake_\rho \\
     &=\frac{1}{\sqrt{|\calL|}\sqrt{|\calL(p^s)|}}\sum_{\rho\in \calL}\sum_{v\in L/p^sL}e\left(-\frac{h}{p^s}q(v+\lambda)\right)\times\\
    &\sum_{\nu\in \calL}e\left(b(\nu,\frac{1}{p^s}(v+\lambda)-\rho)\right)\frake_\rho.
  \end{split}
  \end{equation}
Now, since $\nu \mapsto e\left(b(\nu,\frac{1}{p^s}(v+\lambda)-\rho)\right)$ is character of $\calL$, we have
\begin{equation}\label{eq:character_sum}
  \begin{split}
    \sum_{\nu\in \calL}e\left(b(\nu,\frac{1}{p^s}(v+\lambda)-\rho)\right) =
    \begin{cases}
      |\calL|, & \text{ if } \rho = p^l\left(\frac{1}{p^s}(v+\lambda)\right),\\
      0, & \text{ otherwise}. 
      \end{cases}
    \end{split}
\end{equation}
At this point we need to distinguish the cases $s\le l$ and $s> l$.

{\bf $s\le l$}:  In this case $p^l\left(\frac{1}{p^s}(v+\lambda)\right)$ is equal to $p^{l-s}(v+\lambda)\in \calL$ and we obtain for the last expression in \eqref{eq:action_beta_s_1}
\begin{equation}\label{eq:weil_beta_h_s_1}
  p^{-sD/2}\sum_{v\in L/p^sL}e\left(-\frac{h}{p^s}q(v+\lambda) \right)\frake_{p^{l-s}\lambda}.
  \end{equation}

{\bf $s>l$}:  As in \cite{St}, (5.9), we write the sum over $v$ on the right-hand side of \eqref{eq:action_beta_s_1} in the form
\begin{equation}\label{eq:gauss_sum_scaled_lattice}
\sum_{\substack{\delta\in \calL(p^s)\\p^s\delta=\lambda}}e(-hp^sq(\delta)).
\end{equation}
Since $\rho$ is an element of $\calL$, the sum in \eqref{eq:character_sum} is non-zero if and only if the multiplication of $\delta=\frac{1}{p^s}(v+\lambda)$ with $p^l$ yields an element of $\calL$. This is the case if and only if $\delta\in \calL(p^l)$. Thus, we may replace $\calL(p^s)$ in \eqref{eq:gauss_sum_scaled_lattice} with $\calL(p^l)$. Taking \eqref{eq:gauss_sum_scaled_lattice} and the thoughts before into account, we can replace the right-hand side of \eqref{eq:action_beta_s_1} with 
\begin{equation}\label{eq:weil_repr_identity}
  \begin{split}
  p^{-sD/2}\sum_{\substack{\delta\in \calL(p^l)\\p^s\delta = \lambda}}e(-hp^sq(\delta))\frake_{p^l\delta} &= p^{-sD/2}\sum_{\substack{\mu\in \calL\\p^{s-l}\mu = \lambda}}\sum_{\substack{\delta\in \calL(p^l)\\p^l\delta = \mu}}e(-hp^sq(\delta))\frake_\mu\\
  &= p^{-sD/2}\sum_{\substack{\mu\in \calL\\p^{s-l}\mu = \lambda}}\sum_{v\in L/p^lL}e\left(-\frac{hp^{s-l}}{p^l}q(v+\mu)\right)\frake_\mu. \\
  \end{split}
\end{equation}
\end{proof}

\begin{remark}
  Note that both  sums in \eqref{eq:weil_represenstation} are zero unless $\lambda \in \calL^{p^s}$ as remarked in \cite{St}, p. 245,  for the same type of sum.
  Compared to the original, but false formula, we don't have to add the separate symbol $\delta(\lambda,\cdot)$ to highlight this fact.

  Also note that the formulas \eqref{eq:weil_represenstation} coincide with the corresponding formulas in \cite{BCJ}. 
\end{remark}

The slightly different formulas for $\rho_L(\beta_{h,s})$ lead to slightly different formulas of the Fourier expansions of $T(p^{2l})$ compared to \cite{St}. To state the corrected theorem, we introduce the following notation. According to the decomposition (5.1) in \cite{St} we may write
\begin{equation}\label{eq:hecke_op_decomp}
f\mid_{k,L}T(p^{2l})(\tau) = g_\alpha(\tau) + \sum_{s=1}^{2l-1}\sum_{h\in (\Z/p^s\Z)^*}g_{\beta_{h,s}}(\tau) + \sum_{b\in \Z/p^{2l}\Z}g_b(\tau). 
\end{equation}
To lighten the formulas of the following theorem, we introduce some notation. 
For positive integers $s,l$ with $s<2l$ let $\lambda\in \calL^{p^{l-s}}, \lambda' \in \calL,  n\in \Z+q(\lambda)$ and  put
  \begin{align*}
    &  \mu(\lambda,\lambda') = \lambda/p^{l-s} +\lambda'\in \calL,\\
    & n(\lambda,\lambda') = \frac{n-p^{2(l-s)}q(\mu(\lambda,\lambda'))}{p^{2(l-s)}} + q(\mu(\lambda,\lambda'))) \in \Z+q(\mu(\lambda,\lambda')).
  \end{align*}
 Furthermore, attached to $\nu,\rho \in \calL$ and $m\in \Z+q(\nu)$, $r\in \Z+p^{2(s-l)}q(\rho)$ (assuming $s>l$) 
  we  define the  representation numbers modulo $a$
  \begin{equation}\label{eq:representation_no}
    \begin{split}
&      N_{\nu,m}(a) = |\left\{v\in L/aL\; |\; q(v+\nu) - m \equiv 0 \bmod{a}\right\}|, \\
 &     \widetilde{N}_{\rho,r}(a) = |\{v\in L/(a,p^l)L\; |\; q_{p^{2(s-l)}}(v+\rho) - r\equiv 0 \bmod{a}\}|
      \end{split}
  \end{equation}
and associated to them the sums
\begin{equation}\label{eq:sum_representation_no}
  \begin{split}
    &  G_{\nu,m}(s)= \sum_{a|p^s}\mu\left(\frac{p^s}{a}\right)a^{1-D} N_{\nu,m}(a),\\
    & \widetilde{G}_{\rho,r}(s) = \sum_{a\mid p^s}\mu\left(\frac{p^s}{a}\right)a(a,p^l)^{-D}\widetilde{N}_{\rho,r}(a). 
  \end{split}
\end{equation}
Note that the numbers in \eqref{eq:representation_no}  are well known. For example, they appear as a part of the Fourier expansion of vector valued Eisenstein series, cf. \cite{BK}. The number  $\widetilde{N}_{\rho,n}(a)$ can be interpreted as a variant of the representation number $N_{\lambda,n}(a)$ with respect to the scaled  quadratic form $q_{p^{2(s-l)}}$.

 In terms of these quantities we have
\begin{theorem}[corrected Theorem 5.4]\label{thm:Fourier_exp_hecke_op}
  Let $D=\dim(L)$, $p$ an odd prime,  $s,l$ positive integers with $s<2l$ and
  \[
     K_p=p^{2l(k-1)+s(D/2-k)}, \quad  \widetilde{K}_p = p^{2l(k-1)-sk+(2l-s)D/2}.
  \]
  Let $f\in M_{k,L}$  with Fourier expansion
\[
f(\tau) = \sum_{\lambda\in \calL}\sum_{\substack{ n\in \Z+q(\lambda)\\n\ge 0}}c(\lambda,n)e(n\tau)\frake_\lambda
\]
and
\[
\sum_{h\in (\Z/p^s\Z)^*}g_{\beta_{h,s}}(\tau) = \sum_{\lambda\in \calL}\sum_{\substack{ n\in \Z+q(\lambda)\\n\ge 0}}b_s(\lambda,n)e(n\tau)\frake_\lambda.
\]
Then for  $s=1,\dots,l$ 
\begin{equation}\label{eq:fourier_expansion_hecke_op_s_le_l}
  \begin{split}
    &    b_s(\lambda,n) =\\
  &  K_p\sum_{\substack{\lambda'\in \calL_{p^{l-s}}\\n-p^{2(l-s)}q(\mu(\lambda,\lambda'))\in p^{2(l-s)}\Z}}c\left(\mu(\lambda,\lambda'), n(\lambda,\lambda')\right)G_{\mu(\lambda, \lambda'), n(\lambda,\lambda')}(s).  
  \end{split}
\end{equation}
if $\lambda\in \calL^{p^{l-s}}$ and zero otherwise.
For $s=l+1,\dots, 2l-1$ 
   \begin{equation}\label{eq:fourier_expansion_hecke_op_l_le_s} 
    b_s(\lambda,n) =  \widetilde{K}_pc(p^{s-l}\lambda, p^{2(s-l)}n)\widetilde{G}_{\lambda,p^{2(s-l)}n}(s).
  \end{equation}
\end{theorem}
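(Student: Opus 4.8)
The plan is to follow the architecture of the original proof of Theorem 5.4 in \cite{St}, but to substitute the corrected component formula \eqref{eq:weil_represenstation} at every step where the old (incorrect) expression entered. Starting from \eqref{eq:hecke_op_decomp} I would isolate the block $\sum_{h\in(\Z/p^s\Z)^*}g_{\beta_{h,s}}$ and write each $g_{\beta_{h,s}}$ as the slash action of $\beta_{h,s}$ on $f$. This action separates cleanly into two ingredients: the transformation of the elliptic variable $\tau$, which rescales every Fourier frequency by a power of $p$ and contributes an automorphy factor together with the determinant normalization, and the action on the components $\frake_\lambda$, which is exactly what Theorem~\ref{thm:weil_representation} records. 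After inserting the Fourier expansion of $f$, reading off the coefficient of $e(n\tau)\frake_\lambda$ reduces to two matching conditions: a frequency condition, which selects the contributing Fourier coefficient of $f$, and a lattice-index condition, which is dictated by the index appearing on the right of \eqref{eq:weil_represenstation}.

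The two cases of \eqref{eq:weil_represenstation} then yield the two cases of the theorem. For $s\le l$ the formula sends $\frake_\mu$ to $\frake_{p^{l-s}\mu}$, so a component $\frake_\mu$ of $f$ contributes to $\frake_\lambda$ precisely when $p^{l-s}\mu=\lambda$; these preimages are parameterized by $\mu(\lambda,\lambda')=\lambda/p^{l-s}+\lambda'$ with $\lambda'\in\calL_{p^{l-s}}$, which requires $\lambda\in\calL^{p^{l-s}}$ and is responsible for the outer sum and the ``zero otherwise'' clause in \eqref{eq:fourier_expansion_hecke_op_s_le_l}. The frequency rescaling by $p^{2(l-s)}$ then identifies the source coefficient as $c(\mu(\lambda,\lambda'),n(\lambda,\lambda'))$, and the divisibility $n-p^{2(l-s)}q(\mu(\lambda,\lambda'))\in p^{2(l-s)}\Z$ is exactly the condition for $n(\lambda,\lambda')$ to be an admissible frequency in $\Z+q(\mu(\lambda,\lambda'))$. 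For $l<s$ the second line of \eqref{eq:weil_represenstation} sends $\frake_\mu$ to a combination of $\frake_{\mu'}$ with $p^{s-l}\mu'=\mu$; unwinding this shows that $\frake_\lambda$ receives a single contribution, from $\frake_{p^{s-l}\lambda}$, and together with the opposite scaling of $\tau$ this gives the single term $c(p^{s-l}\lambda,p^{2(s-l)}n)$ of \eqref{eq:fourier_expansion_hecke_op_l_le_s}.

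The decisive step is the summation over $h\in(\Z/p^s\Z)^*$, which transforms the bare Gauss sums of \eqref{eq:weil_represenstation} into the M\"obius-weighted representation numbers \eqref{eq:sum_representation_no}. Interchanging the order of summation, the target-frequency phase combines with the Gauss sum so that the inner sum over units becomes a Ramanujan sum $\sum_{h\in(\Z/p^s\Z)^*}e\!\left(-\tfrac{h}{p^s}\big(q(v+\mu(\lambda,\lambda'))-n(\lambda,\lambda')\big)\right)=\sum_{a\mid p^s}a\,\mu(p^s/a)\,[\,a\mid q(v+\mu(\lambda,\lambda'))-n(\lambda,\lambda')\,]$. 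Since $L$ is integral, $q(v+\mu)\bmod a$ depends only on $v\bmod a$, so the count over $v\in L/p^sL$ factors as $(p^s/a)^D$ times the count over $v\in L/aL$; the $v$-sum therefore collapses to $(p^s)^D\sum_{a\mid p^s}\mu(p^s/a)a^{1-D}N_{\mu(\lambda,\lambda'),n(\lambda,\lambda')}(a)=(p^s)^D\,G_{\mu(\lambda,\lambda'),n(\lambda,\lambda')}(s)$ with the notation of \eqref{eq:representation_no}--\eqref{eq:sum_representation_no}. The factor $(p^s)^D$ merges with $p^{-sD/2}$ from \eqref{eq:weil_represenstation} into $p^{sD/2}$, and with the automorphy and determinant factors $p^{2l(k-1)-sk}$ into the stated constant $K_p=p^{2l(k-1)+s(D/2-k)}$. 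The range $l<s$ is handled verbatim, but with the sum over $v\in L/p^lL$ and the scaled form $q_{p^{2(s-l)}}$, producing $\widetilde N$, $\widetilde G$ and $\widetilde K_p$ in place of $N$, $G$ and $K_p$.

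I expect the \emph{main obstacle} to be the bookkeeping where the three ingredients meet --- the frequency rescaling, the index matching supplied by Theorem~\ref{thm:weil_representation}, and the Ramanujan-sum reduction. The original error in \cite{St} lay precisely in mishandling the representative-dependence of the Gauss-sum phase; since Theorem~\ref{thm:weil_representation} now resolves this, the remaining care is in verifying that the frequency phase combines with the Gauss sum to produce exactly the argument $q(v+\mu(\lambda,\lambda'))-n(\lambda,\lambda')$ of the Ramanujan sum (in particular that the fractional parts cancel, so this argument is an integer and $n(\lambda,\lambda')$ genuinely lies in $\Z+q(\mu(\lambda,\lambda'))$), and that the scattered powers of $p$ assemble precisely into $K_p$ and $\widetilde K_p$. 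This interface is where the computation deserves the most attention, rather than the otherwise routine manipulations.
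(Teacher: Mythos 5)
Your proposal is correct and takes essentially the same route as the paper: insert the corrected formulas of Theorem \ref{thm:weil_representation} into the block $\sum_{h}g_{\beta_{h,s}}$ of the decomposition \eqref{eq:hecke_op_decomp}, split into the cases $s\le l$ and $s>l$, evaluate the unit sum as a M\"obius-weighted Ramanujan sum exactly as in \cite{BK}, Proposition 3 (yielding $p^{sD}G_{\lambda,n}(s)$, resp.\ $p^{lD}\widetilde{G}_{\rho,n}(s)$ with the $L/p^lL$ modification), and assemble the powers of $p$ into $K_p$ and $\widetilde{K}_p$. The only cosmetic difference is that for $s>l$ the paper additionally invokes the vanishing criterion for Ramanujan sums (via \cite{McC} and $s-1\ge 2(s-l)$) to justify re-indexing the surviving frequencies as $n=p^{2(s-l)}m$, whereas you read off the coefficient of $e(n\tau)\frake_\lambda$ directly from the hypothesized expansion; both handle the same bookkeeping point.
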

\begin{proof}
In view of section \ref{eq:subsec_mistakes}, we only need to adjust those parts of the Fourier expansion of $f\mid_{k,L}T(p^{2l})$ which involve the terms $\rho_L(\beta_{h,s})$, that is, the Fourier expansions of $\sum_{s=1}^{2l-1}\sum_{h\in (\Z/p^s\Z)^*}g_{\beta_{h,s}}(\tau)$. 
According to the formulas of $\rho_L(\beta_{h,s})$, we distinguish the cases $s \le l$ and $s>l$. 

For $s\le l$, by replacing the $f$ with its Fourier expansion and $\rho_L(\beta_{h,s})$ with  \eqref{eq:weil_beta_h_s_1}, we have
\begin{equation}\label{eq:fourier_exp}
  \begin{split}
    & \sum_{h\in (\Z/p^s\Z)^\times}\sum_{\lambda\in \calL}(f_\lambda\mid_k \beta_{h,s}) \rho_L^{-1}(\beta_{h,s})\frake_\lambda \\
    & =p^{k(1-s)-sD/2}\sum_{\lambda\in \calL} \sum_{\substack{ n\in \Z+q(\lambda)\\n\ge 0}}c(\lambda,n)e\left(\frac{np^{2l-s}\tau}{p^s}\right)\times \\
  & \sum_{v\in L/p^sL}\sum_{h\in (\Z/p^s\Z)^\times}e\left(\frac{h(q(v+\lambda) - n)}{p^s}\right)\frake_{p^{l-s}\lambda}.
    \end{split}
  \end{equation}
The sum over $(\Z/p^s\Z)^\times$ is a Ramanujan sum, which can be evaluated in terms of the Moebius function $\mu$. Using the same steps as in the proof of  \cite{BK}, Proposition 3, we obtain
\begin{equation}\label{eq:Ramanujan_sum}
\sum_{v\in L/p^sL}\sum_{h\in (\Z/p^s\Z)^\times}e\left(\frac{h(q(v+\lambda) - n)}{p^s}\right)=  p^{sD}\sum_{a|p^s}a^{1-D}\mu\left(\frac{p^s}{a}\right) N_{\lambda,n}(a),
\end{equation}
Inserting the right-hand side of \eqref{eq:Ramanujan_sum} into the Fourier expansion \eqref{eq:fourier_exp}, yields
\begin{align*}
  p^{k(1-s) + sD/2} \sum_{\lambda\in \calL} \sum_{\substack{ n\in \Z+q(\lambda)\\n\ge 0}}c(\lambda,n)G_{\lambda,n}(s)e\left(np^{2(l-s)}\tau\right)\frake_{p^{l-s}\lambda}.
  \end{align*}
At this point the proof in \cite{St} remains unchanged. We merely have to replace the Gauss sum $g(p^s, \chi_p^R, n-q(\lambda))$ with the sum $G_{\lambda,n}(s)$ and follow the subsequent steps to obtain the claimed result. 


$s> l$: We proceed in the same as way as before using the formula \eqref{eq:weil_repr_identity}:

\begin{equation}\label{eq:fourier_exp_beta_h_s}
  \begin{split}
  &  \sum_{h\in (\Z/p^s\Z)^\times}\sum_{\lambda\in \calL}(f_\lambda\mid_k \beta_{h,s}) \rho_L^{-1}(\beta_{h,s})\frake_\lambda \\
  &= \sum_{\lambda\in \calL^{p^{s-l}}}f_\lambda\left(\frac{p^{2l-s}\tau + h}{p^s}\right)\times\\
    &p^{k(1-s)-sD/2}\sum_{\lambda'\in \calL_{p^{s-l}}}\sum_{v\in L/p^lL}\sum_{h\in (\Z/p^s\Z)^\times}e\left(-h\frac{p^{s-l}}{p^l}q(v+\lambda/p^{s-l}+\lambda')\right)\frake_{\lambda/p^{s-l}+\lambda'}\\
    & =p^{k(1-s)-sD/2}\sum_{\rho\in \calL} \sum_{\substack{ n\in \Z+q(p^{s-l}\rho)\\n\ge 0}}c(p^{s-l}\rho,n)\times\\
    &\sum_{v\in L/p^lL}\sum_{h\in (\Z/p^s\Z)^\times}e\left(\frac{h(q(p^{s-l}v+p^{s-l}\rho)-n)}{p^s}\right)e\left(\frac{n\tau}{p^{2(s-l)}}\right)\frake_\rho.
  \end{split}
  \end{equation}
To evaluate the latter two sums, we perform the same steps as in the proof of Proposition 3,  \cite{BK}, with a slight modification including the fact that the first sum runs over $L/p^lL$ and not over $L/p^sL$.  We obtain
\begin{equation}\label{eq:Ramanujan_sum_id}
  \begin{split}
  &\sum_{v\in L/p^lL}\sum_{h\in (\Z/p^s\Z)^\times}e\left(\frac{h(q(p^{s-l}v+p^{s-l}\rho)-n)}{p^s}\right)\\
  &= p^{lD}\sum_{a\mid p^s}\mu\left(\frac{p^s}{a}\right)a(a,p^l)^{-D}\widetilde{N}_{\rho,n}(a).
  \end{split}
\end{equation}
It is well known (see e. g. \cite{McC}) that the sum over $(\Z/p^s\Z)^\times$   on the left-hand side of \eqref{eq:Ramanujan_sum_id} is zero unless
\[
\frac{p^s}{(p^s,q(p^{s-l}v+p^{s-l}\rho)-n)}
\]
is square-free. Equivalently, this sum is non-zero if and only if $p^{s-1}$ or $p^s$  divides $p^{2(s-l)}q(v+\rho) - n$. Since $s-1 \ge 2(s-l)$, we may conclude that in the Fourier expansion of $\sum_{h\in (\Z/p^s\Z)^*}g_{\beta_{h,s}}$ only those $n$ with $n-q_{p^{2(s-l)}}(\rho)\in p^{2(s-l)}\Z$ appear. This fact allows us to replace $n$ with $p^{2(s-l)}m$, where $m\in \Z+q(\rho)$.  We finally obtain for the Fourier expansion in this case
\begin{align*}
 & p^{k(1-s) + (2l-s)D/2}\sum_{\rho\in \calL}\sum_{\substack{m\in \Z+q(\rho)\\m\ge 0}}c(p^{s-l}\rho, p^{2(s-l)}m)\widetilde{G}_{\rho,p^{2(s-l)}m}(s)e(m\tau)\frake_\rho.
  \end{align*}
\end{proof}

\subsection{Algebraic relations of the Hecke operators $T(p^{2l})$}

Bouchard, Creutzig and Joshi (see \cite{BCJ}) introduced a Hecke operator $\calH_{n}$ on the space $M_{k,L}$ of vector valued modular forms transforming with the Weil representation, however on a completely different way than in \cite{BS}. Their approach is more general and allows them to deduce algebraic relations for the operators $\calH_{n}$ for {\it any} $n\in \N$, in particular for {\it all} $n\in \N$ with $(n,|\calL|)>1$. Moreover, Bouchard et. al. were able to prove that their Hecke operators coincide with the ones we defined in \cite{BS} and considered in \cite{St} and the present paper, that is, they proved $\calH_{p^{2l}} = T(p^{2l})$ {\it for all} primes $p$. Based on the equivalence of these two constructions we can carry over the algebraic relations of the operators $\calH_{p^{2l}}$ to the operators $T(p^{2l})$. 

Below we will briefly recall the definition of the Hecke operators $\calH_{n^2}$ and the algebraic relations they satisfy.

The definition of $\calH_{n^2}$ involves the two different operators $\calT_{n^2}$ and  $\calP_{n^2}$ (see Def. 3.8  and 4.2 in \cite{BCJ}). The statement of the algebraic relations depends on a third operator  $\calU_{n^2}$ (Def. 3.13 in \cite{BCJ}). 
The operators $\calU_{n^2}$ and $\calP_{n^2}$ can be interpreted as special instances of the operators $g\uparrow_{H}^A$ and $g\downarrow_{H}^A$ as for example studied in \cite{Br}, Chapter 3. 

For $n,k\in \N$, let  $M_{k,L}$ and $M_{k,L(n^2)}$ be the space of vector valued modular forms transforming with the Weil representation associated to $L$ and the scaled lattice $L(n^2)$, respectively.
Also, for any $\mu\in \calL(r)$ and $k,l\in \N$ with $kl=r$ we define
\[
\Delta_r(\mu,k) =
\begin{cases}
  1, & \text{ if } \mu\in \calL(l)\subset \calL(r),\\
  0, & \text{ otherwise}. 
  \end{cases}
\]
Then
\begin{enumerate}
\item[$i)$]
  \begin{equation}\label{def:T_n}
    \begin{split}
      &    \calT_{n^2}: M_{k,L}\rightarrow M_{k,L(n^2)},\quad F=\sum_{\lambda\in \calL}f_\lambda \frake_\lambda\mapsto \calT_{n^2}(F) \text{ with }\\
      &\calT_{n^2}(F)(\tau)= n^{2(k-1)}\sum_{\mu\in \calL(n^2)}\left(\sum_{\substack{r,s>0\\rs=n^2}}\frac{1}{s^k}\sum_{t=0}^{s-1}\Delta_{n^2}(\mu,r)e\left(-\frac{t}{r}q_{n^2}(\mu)\right)f_{s\mu}\left(\frac{r\tau+t}{s}\right)\right)\frake_\mu,
    \end{split}
  \end{equation}
\item[$ii)$]
  \begin{equation}\label{def:U_n}
    \begin{split}
     & \calU_{n^2}: M_{k,L}\rightarrow M_{k,L(n^2)},\quad F=\sum_{\lambda\in \calL}f_\lambda\frake_\lambda\mapsto \calU_{n^2}(F) \text{ with }\\
&      \calU_{n^2}(F)(\tau)=\sum_{\mu\in \calL(n^2)}\Delta(n,\mu)f_{n\mu}(\tau)\frake_\mu,
      \end{split}
  \end{equation}
\item[$iii)$]
  \begin{equation}\label{def:P_n}
    \begin{split}
      & \calP_{n^2}: M_{k,L(n^2)}\rightarrow M_{k,L},\quad F=\sum_{\mu\in \calL(n^2)}f_\mu \frake_\mu\mapsto \calP_{n^2}(F) \text{ with }\\
      & \calP_{n^2}(F)(\tau)=\sum_{\lambda\in \calL}\left(\sum_{\substack{\mu\in \calL(n)\\n\mu = \lambda}}f_\mu(\tau)\right)\frake_\lambda \text{ and }
    \end{split}
    \end{equation}
\item[$iv)$]
  \begin{equation}\label{def:H_n}
    \calH_{n^2}: M_{k,L}\rightarrow M_{k,L},\quad \calH_{n^2} = \calP_{n^2}\circ \calT_{n^2}.
    \end{equation}
\end{enumerate}

It is proved in \cite{BCJ} that $\calP_{n^2}$ is the left inverse operator to $\calU_{n^2}$, that is
\begin{equation}\label{eq:left_inverse}
  \calP_{n^2}\circ\calU_{n^2} = \id.
  \end{equation}
However, the relation $(\calU_{n^2}\circ \calP_{n^2})(F)= F$ is only valid if $F$ supported on $\calL(n)$ (i. e. $f_\lambda = 0$ if $\lambda\notin \calL(n)$).
As already mentioned, Bouchard et. al compared their Hecke operator with the Hecke operator $T(p^{2l})$ constructed in \cite{BS} and obtained
\begin{equation}\label{eq:comparision_hecke_op}
  T(p^{2l}) = \calH_{p^{2l}}
  \end{equation}
for all primes $p$ and all $l\in \N$.  
The following theorem summarizes the algebraic relations the Hecke operators $\calH_{n^2}$ satisfy.
\begin{theorem}[\cite{BCJ}, Theorem 4.12]
  \begin{enumerate}
    \item[$i)$]
      For $m, n\in \N$ with $(m,n)=1$ we have
      \[
      \calH_{m^2} \circ \calH_{n^2} = \calH_{m^2n^2}.
      \]
    \item[$ii)$]
      For any prime $p$ and any $l\in \N, \; l\ge 2$ the relation
      \begin{equation}\label{eq:rel_hecke_op}
        \calH_{p^{2l}} = \calP_{p^{2l-l}}\circ \calH_{p^2}\circ \calH_{p^{2l-2}}\circ\calU_{p^{2l-2}} - p^{k-1}\calH_{p^{2l-2}}-p^{2(k-1)}\calH_{p^{2l-4}} 
      \end{equation}
      holds.
      \end{enumerate}
\end{theorem}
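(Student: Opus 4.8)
The plan is to prove both relations directly from the defining formulas \eqref{def:T_n}--\eqref{def:H_n} together with the intertwining property \eqref{eq:left_inverse}, reducing everything to the corresponding identities in the classical Hecke algebra of $\GL_2^+(\Q)$. The guiding principle is that $\calH_{p^{2l}}$ is the vector-valued shadow of the scalar operator $T(p^{2l})$: the inner sum over factorizations $rs=n^2$ with $0\le t<s$ in \eqref{def:T_n} is precisely the Hermite-form parametrization of the double coset attached to $\diag(1,n^2)$, while the roots of unity $e(-\frac{t}{r}q_{n^2}(\mu))$ and the conditions $\Delta_{n^2}(\mu,r)$ supply the Weil-representation data that make the sum transform correctly on the scaled lattice. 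Consequently both relations should mirror classical Hecke identities, and the real content lies in tracking the lattice scalings introduced by $\calT$, $\calU$ and $\calP$.

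For Part $i)$ I would compute the $\frake_\lambda$-coefficient of $\calH_{m^2}\circ\calH_{n^2}(F)$ and of $\calH_{(mn)^2}(F)$ from \eqref{def:T_n} and \eqref{def:P_n} and match them. Since $(m,n)=1$, the Chinese Remainder Theorem gives a bijection between factorizations $rs=(mn)^2$ and pairs $r_1s_1=m^2$, $r_2s_2=n^2$ via $r=r_1r_2$, $s=s_1s_2$; under it the prefactors $n^{2(k-1)}s^{-k}$ multiply, the translation parameter $t$ splits as $t\equiv t_1 \bmod s_1$, $t\equiv t_2 \bmod s_2$, and both the phase $e(-\frac{t}{r}q_{n^2}(\mu))$ and the support condition $\Delta$ factor because the $m$- and $n$-parts of $\calL((mn)^2)=\calL(m^2n^2)$ are orthogonal. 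Matching the two coefficient expressions then yields multiplicativity; this is the standard argument and I expect no essential difficulty here.

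For Part $ii)$ the key observation is that the stated identity is the vector-valued form of the classical relation
\[
T(p^2)\,T(p^{2l-2})=T(p^{2l})+p^{k-1}T(p^{2l-2})+p^{2(k-1)}T(p^{2l-4}),
\]
the non-coprime case of $i)$ coming from the three divisors $d\in\{1,p,p^2\}$ of $\gcd(p^2,p^{2l-2})=p^2$ (using $l\ge 2$). The reason this cannot be written directly on $M_{k,L}$ is that the vector-valued Hecke product is not the naive composition: $\calH_{p^2}$ "spends" its scaling, so a second factor cannot register the overlaps on $M_{k,L}$. The role of $\calU_{p^{2l-2}}$ and of its left inverse $\calP_{p^{2l-2}}$ (the map $M_{k,L(p^{2l-2})}\to M_{k,L}$ appearing in \eqref{eq:rel_hecke_op}) is to lift a form from $M_{k,L}$ to the scaled lattice $M_{k,L(p^{2l-2})}$, where both factors act on a common ambient lattice, and then to descend. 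Concretely I would (a) prove the overlap formula $\calH_{p^2}\circ\calH_{p^{2l-2}}=\calH_{p^{2l}}+p^{k-1}\calH_{p^{2l-2}}+p^{2(k-1)}\calH_{p^{2l-4}}$ on $M_{k,L(p^{2l-2})}$, and (b) establish the transport lemma $\calP_{p^{2l-2}}\circ\calH_{p^{2j}}\circ\calU_{p^{2l-2}}=\calH_{p^{2j}}$ on $M_{k,L}$ for $j=l-2,l-1,l$, using that $\calU_{p^{2l-2}}$ embeds $M_{k,L}$ into $M_{k,L(p^{2l-2})}$ with left inverse $\calP_{p^{2l-2}}$ by \eqref{eq:left_inverse}, that $\calH_{p^{2j}}$ preserves the image $\calU_{p^{2l-2}}(M_{k,L})$, and naturality of the construction in the lattice. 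Composing (a) with (b) and rearranging then gives \eqref{eq:rel_hecke_op}.

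I expect the main obstacle to be step (a), the overlap computation. Pinning down the precise correction coefficients $p^{k-1}$ and $p^{2(k-1)}$ requires expanding the double product $\calT_{p^2}\circ\calT_{p^{2l-2}}$ over pairs of factorizations, reindexing by the combined factorization $rs=p^{2l}$, and showing that the fibers over a fixed $(r,s)$ sharing a common factor $d=p,p^2$ contribute exactly the lower operators once the root-of-unity sums $\sum_t e(-\frac{t}{r}q_{n^2}(\mu))$ and the $\Delta$-conditions are evaluated with respect to the scaled form $q_{p^{2(\cdot)}}$. This is the vector-valued analogue of the classical coset-intersection count, and the care needed to match the Weil-representation phases across the two scalings, together with the bookkeeping of which lattice each $\calP$, $\calU$ and $\calH$ acts on, is where the argument is most delicate.
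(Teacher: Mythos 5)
First, a point of order: the paper you were given does not prove this statement at all --- it is imported verbatim from \cite{BCJ} (their Theorem 4.12), and the present note only uses it, via the identification $T(p^{2l})=\calH_{p^{2l}}$, to deduce the Corollary. So the comparison is necessarily with the argument in \cite{BCJ}, not with anything in this paper. (Incidentally, you silently and correctly read the misprint $\calP_{p^{2l-l}}$ in \eqref{eq:rel_hecke_op} as $\calP_{p^{2l-2}}$.)

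Your sketch of part $i)$ is the standard CRT/coefficient-matching argument and is essentially the right route, modulo checking that the intermediate projection $\calP_{n^2}$ sitting inside $\calH_{m^2}\circ\calH_{n^2}=\calP_{m^2}\calT_{m^2}\calP_{n^2}\calT_{n^2}$ commutes past the $m$-part operators when $(m,n)=1$; you do not mention this, but it is routine. The genuine gap is in part $ii)$, at your step (a). You assume that the naive three-term relation $\calH_{p^2}\circ\calH_{p^{2l-2}}=\calH_{p^{2l}}+p^{k-1}\calH_{p^{2l-2}}+p^{2(k-1)}\calH_{p^{2l-4}}$ holds for the Hecke operators \emph{on the fixed scaled lattice} $L(p^{2l-2})$, and you offer no mechanism for why scaling would make it true. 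But the obstruction that defeats this relation on $M_{k,L}$ --- the failure of $\calU_{n^2}\circ\calP_{n^2}=\id$ except on forms supported on $\calL(n)$, i.e.\ the ramification of the Weil representation at $p$ --- is present for $L(p^{2l-2})$ as well, and if anything is worse there, since $|\calL(p^{2l-2})|=p^{(2l-2)D}|\calL|$. The closing Remark of this very paper stresses that a $\calU$- and $\calP$-free relation of exactly this shape does not seem to exist even under strong hypotheses on $\calL$; positing it on the scaled lattice is therefore begging the question --- the conjugation by $\calP_{p^{2l-2}}$ and $\calU_{p^{2l-2}}$ in \eqref{eq:rel_hecke_op} exists precisely because no such clean relation among the $\calH$'s on a single lattice is available. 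Your step (b), the transport lemma $\calP_{p^{2l-2}}\circ\calH_{p^{2j}}\circ\calU_{p^{2l-2}}=\calH_{p^{2j}}$, is likewise only asserted: its key ingredient, that $\calH_{p^{2j}}$ preserves the image of $\calU_{p^{2l-2}}$ (forms supported on the sublattice-coset set $\calL(p^{l-1})$ with components pulled back along $\mu\mapsto p^{l-1}\mu$), is exactly the kind of invariance that needs a computation and cannot be waved through by ``naturality,'' again because $\calU\circ\calP\neq\id$ in general.

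The actual route in \cite{BCJ} is structured differently: the overlap/coset computation is performed once, at the level of the operators $\calT$ between \emph{different} scaled spaces, where the composition $\calT_{p^2}\circ\calT_{p^{2l-2}}\colon M_{k,L}\to M_{k,L(p^{2l})}$ is expressed through $\calT_{p^{2l}}$ with lower-order correction terms that explicitly carry $\calU$-operators (this is where your coefficients $p^{k-1}$ and $p^{2(k-1)}$ arise, from the $t$-sums and the $\Delta$-conditions); the relation \eqref{eq:rel_hecke_op} for the $\calH$'s is then obtained by descending with the projections, using compatibilities of the form $\calP_{m^2n^2}=\calP_{n^2}\circ\calP_{m^2}$ together with \eqref{eq:left_inverse}. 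At no stage does one have, or need, your lemma (a). So while your plan is internally consistent in the sense that (a) and (b) together would formally yield \eqref{eq:rel_hecke_op}, it hinges on an unsubstantiated and most likely false central lemma, and as it stands the proposal does not constitute a proof.
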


The identities \eqref{eq:comparision_hecke_op} and \eqref{eq:rel_hecke_op} immediately yield to the corresponding relation for the operators $T(p^{2l})$ as a corollary.
\begin{corollary}
  For any prime $p$ and any $l\in \N$ the relation
  \begin{equation}\label{eq:hecke_operator_rel_bs}
    T(p^{2l}) = \calP_{p^{2l-2}}\circ T(p^2) \circ T(p^{2l-2})\circ \calU_{p^{2l-2}} - p^{k-1}T(p^{2l-2}) - p^{2(k-1)}T(p^{2l-4})
  \end{equation}
  holds.
\end{corollary}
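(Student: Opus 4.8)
The plan is to obtain \eqref{eq:hecke_operator_rel_bs} from the relation \eqref{eq:rel_hecke_op} by a termwise substitution, using that \eqref{eq:comparision_hecke_op} is an operator identity $\calH_{p^{2m}} = T(p^{2m})$ which holds not merely for the fixed lattice $L$ but, as established in \cite{BCJ}, for every even lattice. Concretely, I would fix an odd prime $p$ and an integer $l\ge 2$, write out \eqref{eq:rel_hecke_op} (correcting the evident misprint $\calP_{p^{2l-l}}$ to $\calP_{p^{2l-2}}$), and then replace each Hecke factor $\calH_{p^{2m}}$ by $T(p^{2m})$ on whichever space it acts.

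The essential bookkeeping is to keep track of the source and target lattice of each factor. The auxiliary operators $\calU_{p^{2l-2}}\colon M_{k,L}\to M_{k,L(p^{2l-2})}$ and $\calP_{p^{2l-2}}\colon M_{k,L(p^{2l-2})}\to M_{k,L}$ are not Hecke operators and are carried through the substitution unchanged. The inner composition $\calH_{p^2}\circ\calH_{p^{2l-2}}$ in \eqref{eq:rel_hecke_op} acts on the scaled space $M_{k,L(p^{2l-2})}$, whereas the outer terms $\calH_{p^{2l}}$, $\calH_{p^{2l-2}}$ and $\calH_{p^{2l-4}}$ act on $M_{k,L}$. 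Applying \eqref{eq:comparision_hecke_op} on $M_{k,L(p^{2l-2})}$ converts the inner factors into $T(p^2)\circ T(p^{2l-2})$ (Hecke operators for the scaled lattice), and applying it on $M_{k,L}$ converts the outer factors into $T(p^{2l})$, $T(p^{2l-2})$ and $T(p^{2l-4})$; since the numerical coefficients $-p^{k-1}$ and $-p^{2(k-1)}$ are untouched, collecting terms reproduces \eqref{eq:hecke_operator_rel_bs} verbatim. I would also point out that the symbol $T$ in the inner product is thereby overloaded, denoting the Hecke operator on $M_{k,L(p^{2l-2})}$ rather than on $M_{k,L}$.

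I expect the substitution itself to be entirely routine; the one step that genuinely has to be justified is the availability of \eqref{eq:comparision_hecke_op} on the scaled lattice $L(p^{2l-2})$, which is what licenses rewriting the interior factors. This is guaranteed because the comparison theorem of \cite{BCJ} is proved for an arbitrary even lattice, not just for $L$. The remaining subtlety, and the only real obstacle, is the range of $l$: the underlying relation \eqref{eq:rel_hecke_op} is stated in \cite{BCJ} only for $l\ge 2$, and for $l=1$ the term $T(p^{2l-4})=T(p^{-2})$ is undefined. Checking the case $l=1$ by hand, with the natural conventions $\calP_{p^0}=\calU_{p^0}=\id$ and $T(p^0)=\id$, gives $T(p^2)-p^{k-1}\id$ on the right-hand side, so the displayed identity does not survive; I would therefore state the corollary for $l\ge 2$ rather than for all $l\in\N$.
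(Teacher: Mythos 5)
Your proof is correct and is essentially the paper's own argument: the paper obtains this corollary by nothing more than substituting the comparison identity \eqref{eq:comparision_hecke_op} into the BCJ relation \eqref{eq:rel_hecke_op}, and your bookkeeping (the inner factors $T(p^2)\circ T(p^{2l-2})$ being Hecke operators on $M_{k,L(p^{2l-2})}$, where the comparison theorem still applies because \cite{BCJ} prove it for arbitrary even lattices) just makes explicit what the paper leaves implicit. Two remarks: your restriction to $l\ge 2$ is a warranted correction of the paper's ``any $l\in\N$'', since \eqref{eq:rel_hecke_op} requires $l\ge 2$ and $T(p^{-2})$ is undefined for $l=1$; by contrast, your restriction to odd $p$ is unnecessary, as both \eqref{eq:comparision_hecke_op} and \eqref{eq:rel_hecke_op} hold for every prime and nothing in the substitution uses oddness.
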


\begin{remark}
  The formula \eqref{eq:hecke_operator_rel_bs} is not as definitive as the corresponding formula for the classical scalar valued Hecke operators due to the appearance of the operators $\calP_{p^{2l-2}}$ and $\calU_{p^{2l-2}}$. For example, it is unclear to me how to derive a recursion formula for the eigenvalues of a common Hecke eigenform from \eqref{eq:hecke_operator_rel_bs} (if it is even possible). Also, it does not provide the basis for the proof that the Hecke operators are commutative. 
 It would be desirable to have a formula expressing $T(p^2)\circ T(p^{2l-2})$ (without the operators $\calU_{p^{2l-2}}$ and $\calP_{p^{2l-2}}$) in terms of the Hecke operators $T(p^{2l})$ and $T(p^{2l-4})$ for all primes $p$. However, such a formula does seem not to exist. Even under quite strong restrictions on the Jordan block decomposition of the discriminant form $\calL$ (implying in particular  that $\calL$ is anisotropic) a formula of this type cannot be deduced. It would be interesting to know under what conditions this is possible.  
\end{remark}

\end{document}